\theoremstyle{plain}
\newtheorem{theorem}{Theorem}[section]
\newtheorem{proposition}[theorem]{Proposition}
\newtheorem{corollary}[theorem]{Corollary}
\theoremstyle{definition}
\newtheorem{remark}[theorem]{Remark}
\begin{document}

\afterpage{\rhead[]{\thepage} \chead[\small W.A. Dudek and R.S.
Gigo\'n]{\small  Congruences on completely inverse
$AG^{**}$-groupoids} \lhead[\thepage]{} }
\begin{center}
\vspace*{2pt}
{\Large \textbf{Congruences on completely inverse $AG^{**}$-groupoids}}\\[30pt]
{\large \textsf{\emph{Wieslaw A. Dudek \ and \ Roman S. Gigo\'n
}}}
\\[30pt]
\end{center}
{\footnotesize\textbf{Abstract.} By a completely inverse $AG^{**}$-groupoid we mean an inverse $AG^{**}$-groupoid $A$ satisfying the identity $xx^{-1}=x^{-1}x$, where $x^{-1}$ denotes a unique element of $A$ such that $x=(xx^{-1})x$ and $x^{-1}=(x^{-1}x)x^{-1}.$ We show that the set of all idempotents of such groupoid forms a semilattice and the Green's relations $\mathcal{H,L, R,D}$ and $\mathcal{J}$ coincide on $A$.~The main result of this note says that any completely inverse $AG^{**}$-groupoid meets the famous Lallement's Lemma for regular semigroups. Finally, we show that the Green's relation $\mathcal{H}$ is both the least semilattice congruence and the maximum idempotent-separating congruence on any completely inverse $AG^{**}$-groupoid.}
\footnote{\textsf{2010 Mathematics Subject Classification: 20N02, 06B10}
} \footnote{\textsf{Keywords:} completely inverse $AG^{**}$-groupoid, $AG$-group, $LA$-semigroup, congruence,
\hspace*{5mm} Green's relation.}

\section*{\centerline{1. Preliminaries}}\setcounter{section}{1}\setcounter{theorem}{0}

By an \emph{Abel-Grassmann's groupoid} (briefly an \emph{$AG$-groupoid}) we shall mean any groupoid which satisfies the identity
\begin{equation}\label{e1}
xy\cdot z=zy\cdot x.
\end{equation}
Such groupoid is also called a \emph{left almost semigroup} (briefly an \emph{$LA$-semi\-group}) or a \emph{left invertive groupoid} (cf.\,\cite{Hol}, \cite{KN} or \cite{MIq}).~This structure is closely related to a commutative semigroup, because if an $AG$-groupoid contains a right identity, then it becomes a commutative monoid.~Moreover, if an $AG$-groupoid $A$ with a left zero $z$ is finite, then (under certain conditions) $A\setminus\{z\}$ is a commutative group (cf.\,\cite{MK}).

\smallskip

One can easily check that in an arbitrary $AG$-groupoid $A$, the so-called \emph{medial law} is valid, that is, the equality
\begin{eqnarray}\label{medial}
ab\cdot cd=ac\cdot bd
\end{eqnarray}
holds for all $a,b,c,d\in A$.

Recall from \cite{PS} that an \emph{$AG$-band} $A$ is an $AG$-groupoid satisfying the identity $x^2=x$.~If in addition, $ab=ba$ for all $a,b\in A$, then $A$ is called an \emph{$AG$-semilattice}.

Let $A$ be an $AG$-groupoid and $B\subseteq A$.~Denote the set of all idempotents of $B$ by $E_B$, that is, $E_B=\{b\in B:b^2=b\}$.~From $(\ref{medial})$ follows that if $E_A \neq\emptyset$, then $E_A E_A\subseteq E_A$, therefore, $E_A$ is an $AG$-band.

\smallskip

Further, an $AG$-groupoid satisfying the identity
\begin{equation}\label{**}
x\cdot yz=y\cdot xz
\end{equation}
is said to be an \emph{$AG^{**}$-groupoid}.~Every $AG^{**}$-groupoid is {\em paramedial} (cf.\,\cite{BPS}), i.e., it satisfies the identity
\begin{eqnarray}\label{paramedial}
ab\cdot cd=db\cdot ca .
\end{eqnarray}
Notice that each $AG$-groupoid with a left identity is an $AG^{**}$-groupoid (see \cite{BPS}, too).~Furthermore, observe that if $A$ is an $AG^{**}$-groupoid, then (\ref{paramedial}) implies that if $E_A\neq\emptyset$, then it is an $AG$-semilattice.~Indeed, in this case $E_A$ is an $AG$-band and $ef=ee\cdot ff=fe\cdot fe=fe$ for all $e,f\in E_A$. Moreover, for $a,b\in A$ and $e\in E_A$, we have
$$
e\cdot ab=ee\cdot ab=ea\cdot eb=e(ea\cdot b)=e(ba\cdot e)=ba\cdot ee=ba\cdot e=ea\cdot b,
$$
that is,
\begin{equation}\label{e*}
e\cdot ab=ea\cdot b
\end{equation}
for all $a,b\in A$ and $e\in E_A$.~Thus, as a consequence, we obtain

\begin{proposition}\label{semilattice} The set of all idempotents of an $AG^{**}$-groupoid is either empty or a semilattice.
\end{proposition}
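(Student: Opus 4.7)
The plan is as follows. If $E_A=\emptyset$ there is nothing to prove, so I assume $E_A\neq\emptyset$ and verify the three defining properties of a (meet-)semilattice in turn: idempotency, commutativity, and associativity of the multiplication restricted to $E_A$.

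Idempotency is the defining condition on elements of $E_A$, so there is nothing to check. Closure under the operation and commutativity of that operation are already essentially recorded in the paragraph preceding the statement: applying the medial law (\ref{medial}) to $e,f\in E_A$ gives $(ef)(ef)=(ee)(ff)=ef$, so $ef\in E_A$; and applying the paramedial law (\ref{paramedial}), which holds in every $AG^{**}$-groupoid, yields
\[
ef \;=\; (ee)(ff) \;=\; (fe)(fe) \;=\; fe,
\]
so the operation is commutative on $E_A$.

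The one point not already present in the text is associativity, and this is where I would put the remaining effort. Here I would invoke the elementary observation that any commutative $AG$-groupoid is automatically a semigroup: combining the $AG$-identity (\ref{e1}) with the commutativity established in the previous step gives, for $e,f,g\in E_A$,
\[
(ef)g \;=\; (gf)e \;=\; e(gf) \;=\; e(fg),
\]
where the first equality is (\ref{e1}) and the remaining two are commutativity. Together with idempotency and commutativity this shows that $E_A$ is a semilattice.

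I do not anticipate any real obstacle here. Almost the entire argument is a direct quotation of the paragraph immediately preceding the proposition, and the only new input required is the three-line derivation of associativity from the $AG$-axiom and commutativity.
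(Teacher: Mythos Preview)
Your argument is correct. Closure and commutativity are exactly as in the paragraph preceding the statement, and your three-line derivation of associativity from the $AG$-identity and commutativity is valid: it is simply the standard observation that a commutative $AG$-groupoid is a commutative semigroup.

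The paper takes a slightly different route to associativity. Rather than restricting to idempotents, it first establishes the stronger identity
\[
e\cdot ab = ea\cdot b \qquad (e\in E_A,\ a,b\in A),
\]
labelled (\ref{e*}), and associativity on $E_A$ then falls out as the special case $a,b\in E_A$. Your approach is shorter and perfectly adequate for the proposition as stated; the paper's detour buys the identity (\ref{e*}) itself, which is invoked again later in the proof of Theorem~\ref{i-s}.
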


We say that an $AG$-groupoid $A$ with a left identity $e$ is an \emph{$AG$-group} if each of its elements has a \emph{left inverse} $a'$, that is, for every $a\in A$ there exists $a'\in A$ such that $a'a=e$. It is not difficult to see that such element $a'$ is uniquely determined and $aa'=e$. Therefore an $AG$-group has exactly one idempotent.

\smallskip

Let $A$ be an arbitrary groupoid, $a\in A$. Denote by $V(a)$ the set of all \emph{inverses} of $a$, that is, $$
V(a)=\{a^{*}\in A:a=aa^{*}\cdot a,~a^{*}=a^{*}a\cdot a^{*}\}.
$$
An $AG$-groupoid $A$ is called \emph{regular} (in \cite{BPS} it is called \emph{inverse}) if $V(a)\neq\emptyset$ for all $a\in A$. Note that $AG$-groups are of course regular $AG$-groupoids, but the class of all regular $AG$-groupoids is vastly more extensive than the class of all $AG$-groups. For example, every $AG$-band $A$ is regular, since $a=aa\cdot a\,$ for all $a\in A$. In \cite{BPS} it has been proved that in any regular $AG^{**}$-groupoid $A$ we have $|V(a)|=1$ $(a\in A)$, so we call it an \emph{inverse $AG^{**}$-groupoid}.~In this case, we denote a unique inverse of $a\in A$ by $a^{-1}$.~Notice that $(ab)^{-1}=a^{-1}b^{-1}$ for all $a,b\in A$.~Further, one can prove that in an inverse $AG^{**}$-groupoid $A$, we have $aa^{-1}=a^{-1}a$ if and only if $aa^{-1},a^{-1}a\in E_A$ (cf.\,\cite{BPS}).

\smallskip

Many authors studied various congruences on some special classes of
$AG^{**}$-groupoids and described the corresponding quotient
algebras as semilattices of some subgroupoids (see for example
\cite{BPS, MIq, MK1, MK2, P, PB}).~Also, in \cite{BPS, P} the authors
studied congruences on inverse $AG^{**}$-groupoids satisfying the
identity  $xx^{-1}=x^{-1}x$.~We will be called such groupoids
\emph{completely inverse $AG^{**}$-groupoids}.~A simple example of
such $AG^{**}$-groupoid is an $AG$-group.~In the light of
Proposition \ref{semilattice}, the set of all idempotents of any
completely inverse $AG^{**}$-groupoid forms a semilattice.

\smallskip

A nonempty subset $B$ of a groupoid $A$ is called a \emph{left ideal} of $A$ if $AB\subseteq B$. The notion of a \emph{right ideal} is defined dually. Also, $B$ is said to be an \emph{ideal} of $A$ if it is both a left and right ideal of $A$. It is clear that for every $a\in A$ there exists the least left ideal of $A$ containing the element $a$. Denote it by $L(a)$. Dually, $R(a)$ is the least right ideal of $A$ containing the element $a$.~Finally, $J(a)$ denotes the least ideal of $A$ containing $a\in A$.

\smallskip

In a similar way as in semigroup theory we define the \emph{Green's equi\-valences} on an $AG$-groupoid $A$ by putting:
\[
\begin{array}{c}
a\,\mathcal{L}\,b\iff L(a)=L(b),\\[4pt]
a\,\mathcal{R}\,b\iff R(a)=R(b),\\[4pt]
a\,\mathcal{J}\,b\iff J(a)=J(b),\\[4pt]
\mathcal{H}=\mathcal{L}\cap\mathcal{R},~~\mathcal{D}=\mathcal{L}\vee\mathcal{R}.
\end{array}
\]

\section*{\centerline{2. The main results}}\setcounter{section}{2}\setcounter{theorem}{0}

Let $A$ be a completely inverse $AG^{**}$-groupoid. Then
$$
a=(aa^{-1})a\in Aa
$$
for every $a\in A$.

\begin{proposition}\label{ideals} Let $A$ be a completely inverse $AG^{**}$-groupoid, $a\in A$.~Then$:$

$(a)$ \ $aA=Aa;$

$(b)$ \ $aA=L(a)=R(a)=J(a);$

$(c)$ \ $\mathcal{H}=\mathcal{L}=\mathcal{R}=\mathcal{D}=\mathcal{J};$

$(d)$ \ $aA=(aa^{-1})A;$

$(e)$ \ $aA=a^{-1}A;$

$(f)$ \ $eA=fA$ implies $\,e=f\,$ for all $\,e,f\in E_A.$
\end{proposition}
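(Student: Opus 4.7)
The proposition has six parts that cascade, with (a) doing the real work and (b)--(f) following quickly once $aA = Aa$ is established; my plan is therefore to concentrate on (a) and then collect the remaining parts. For (a), the key preparation is to observe that $aa^{-1} = a^{-1}a \in E_A$ and to derive the handy identity $a = a^2 a^{-1}$ by applying the left invertive law $(\ref{e1})$ to $a = (a^{-1}a)a$. For $aA \subseteq Aa$, starting from $ax = ((a^{-1}a)a)x$, I apply $(\ref{e1})$ to get $(xa)(a^{-1}a)$, paramedial $(\ref{paramedial})$ to get $(aa)(a^{-1}x)$, and $(\ref{e1})$ once more to conclude $ax = ((a^{-1}x)a) \cdot a \in Aa$. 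For $Aa \subseteq aA$, starting from $xa = x((aa^{-1})a)$, I apply $(\ref{**})$ to get $(aa^{-1})(xa)$, medial $(\ref{medial})$ to reshape into $(ax)(aa^{-1})$, paramedial to get $(a^{-1}x)(aa)$, and $(\ref{**})$ once more to factor an $a$ out on the left, producing $xa = a((a^{-1}x)a) \in aA$. These two chains are the main obstacle: the four interacting identities $(\ref{e1})$, $(\ref{medial})$, $(\ref{paramedial})$, $(\ref{**})$ admit many rewritings, and the correct order of applications must be found by experimentation.

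With (a) in hand, part (b) reduces to checking that $aA$ is a two-sided ideal containing $a$. The identity $y(ax) = a(yx)$ from $(\ref{**})$ gives the left-ideal property; $(ax)y = (yx)a \in Aa = aA$ from $(\ref{e1})$ and (a) gives the right-ideal property; and $a = (aa^{-1})a \in Aa = aA$ shows $a \in aA$. Conversely, any left (resp.\ right) ideal containing $a$ contains $Aa$ (resp.\ $aA$), so by (a) we obtain $L(a) = R(a) = J(a) = aA$. Part (c) is then immediate: since the three principal filters coincide, so do $\mathcal{L}$, $\mathcal{R}$, $\mathcal{J}$, whence $\mathcal{H} = \mathcal{L} \cap \mathcal{R}$ and $\mathcal{D} = \mathcal{L} \vee \mathcal{R}$ collapse to the same relation.

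For (d), the inclusion $(aa^{-1})A \subseteq aA$ follows from $(aa^{-1})y = (ya^{-1})a \in Aa = aA$ via $(\ref{e1})$ and (a); for the reverse, the paramedial-based chain $ax = ((aa^{-1})a)x = (xa)(aa^{-1}) = (a^{-1}a)(ax) = (aa^{-1})(ax)$ places $ax$ into $(aa^{-1})A$. Part (e) then follows from (d) applied to both $a$ and $a^{-1}$, using $(a^{-1})^{-1} = a$ and $aa^{-1} = a^{-1}a$: $aA = (aa^{-1})A = (a^{-1}a)A = a^{-1}A$. Finally for (f), if $eA = fA$ then $e = e\cdot e \in eA = fA$ gives $e = fy$ for some $y$, so by $(\ref{e*})$ and $f^2 = f$ we have $fe = f(fy) = (ff)y = fy = e$; symmetrically $ef = f$; since $E_A$ is a semilattice by Proposition~\ref{semilattice} we have $ef = fe$, and therefore $e = f$.
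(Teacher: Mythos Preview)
Your proof is correct and follows essentially the same approach as the paper: establish $aA=Aa$ by rewriting with the identities (\ref{e1}), (\ref{medial}), (\ref{paramedial}), (\ref{**}), then deduce (b)--(f) from (a) exactly as you do. Your identity chains in (a) differ cosmetically from the paper's (you route $Aa\subseteq aA$ through $(a^{-1}x)(aa)$ and (\ref{**}), the paper through $ab\cdot aa^{-1}$ and (\ref{**})), but the structure and length are the same; the auxiliary identity $a=a^2a^{-1}$ you derive is not actually used and can be dropped.
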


\begin{proof} $(a)$. Let $b\in A$. Then
$$
ab=(aa^{-1})a\cdot b=ba\cdot aa^{-1}=ba\cdot a^{-1}a=aa\cdot a^{-1}b=(a^{-1}b\cdot a)a\in Aa.$$
Thus $aA\subseteq Aa$. Also,
$$
ba=b\cdot (aa^{-1})a=aa^{-1}\cdot ba=ab\cdot a^{-1}a=ab\cdot aa^{-1}=a(ab\cdot a^{-1})\in aA,
$$
so $Aa\subseteq aA$.~Consequently, $aA=Aa$.

\smallskip
$(b)$. Obviously, it is sufficient to show that $aA=Aa$ is an ideal of $A$. Let $x=ab\in aA$ and $c\in A$.~Then we have $cx=c(ab)=a(cb)\in aA$ and $xc=(ab)c=(cb)a\in Aa=aA$.

\smallskip
$(c)$. It follows from $(b)$.

\smallskip
$(d)$. Let $b\in A$. Then $ab=(aa^{-1})a\cdot b=ba\cdot aa^{-1}\in A(aa^{-1})=(aa^{-1})A,$ that is, $aA\subseteq (aa^{-1})A$. Furthermore, $(aa^{-1})b=(ba^{-1})a\in Aa=aA$. Thus $(aa^{-1})A\subseteq aA$. Consequently, the condition $(d)$ holds.

\smallskip
$(e)$. By $(d)$, $aA=(aa^{-1})A=(a^{-1}a)A=(a^{-1}(a^{-1})^{-1})A$ $=a^{-1}A$.

\smallskip
$(f)$. Let $e,f\in E_A$ and $eA=fA$. Then $e\in fA$, that is, $e=fa$ for some $a\in A$.~Hence $fe=f(fa)=(ff)a$ (by Proposition \ref{semilattice}), and so $fe=e$. Similarly, $ef=f$. Since $E_A$ is a semilattice, $e=f$.
\end{proof}

\begin{corollary} Let $A$ be a completely inverse $AG^{**}$-groupoid.~Then each left ideal of $A$ is also a right ideal of $A$, and vice versa.~In particular, $$L\cap R=LR$$ for every $($left$)$ ideal $L$ and every $($right$)$ ideal $R$.
\end{corollary}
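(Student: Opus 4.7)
The plan is to bootstrap everything from Proposition \ref{ideals}(a), which asserts $aA=Aa$ for every $a\in A$. Take a left ideal $L$ and pick $\ell\in L$, $a\in A$. Then $\ell a\in \ell A=A\ell\subseteq AL\subseteq L$, so $LA\subseteq L$, i.e.\ $L$ is a right ideal. The reverse direction is symmetric: if $R$ is a right ideal and $r\in R$, $a\in A$, then $ar\in Ar=rA\subseteq RA\subseteq R$. So left and right ideals coincide.

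For the equality $L\cap R=LR$, the inclusion $LR\subseteq L\cap R$ follows at once from what we just proved, since both $L$ and $R$ are now two-sided: $LR\subseteq LA\subseteq L$ and $LR\subseteq AR\subseteq R$. For the opposite inclusion, take $x\in L\cap R$. The identity $x=(xx^{-1})x$ valid in every completely inverse $AG^{**}$-groupoid lets us write $x$ as a product, and it remains only to place the two factors in the right sets. Since $x\in L$ and $L$ is now two-sided, $xx^{-1}\in xA=Ax\subseteq AL\subseteq L$; meanwhile $x\in R$ by hypothesis. Hence $x=(xx^{-1})\cdot x\in LR$, completing the proof.

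The only subtlety is the final step, where one must resist the temptation to use the trivial factorization $x=x\cdot x^{-1}\cdot\ldots$; the cleanest move is exactly $x=(xx^{-1})x$, because $xx^{-1}\in xA$ lands inside $L$ via Proposition \ref{ideals}(a). Nothing deeper is needed, so no step is truly an obstacle.
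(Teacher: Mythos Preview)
Your proof is correct and follows essentially the same approach as the paper: both use $aA=Aa$ from Proposition~\ref{ideals}(a) to pass between left and right ideals, and both obtain $L\cap R\subseteq LR$ via the factorization $x=(xx^{-1})x$. The only cosmetic difference is that the paper phrases the first part by writing $L=\bigcup_{l\in L}lA$ as a union of two-sided ideals (invoking Proposition~\ref{ideals}(b)), whereas you argue elementwise using only part~(a); your route is marginally more direct but not genuinely different.
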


\begin{proof} Let $L$ be a left ideal of $A$ and $l\in L$.~Then $lA=Al\subseteq L$.~It follows that
$$L=\bigcup\{lA:l\in L\}.$$
Since each component $lA$ of the above set-theoretic union is a right ideal of $A$, then $L$ is itself a right ideal of $A$.~Similar arguments show that every right ideal of $A$ is a left ideal.

Clearly, $LR\subseteq L\cap R$.~Conversely, if $a\in L\cap R,$ then $a=(aa^{-1})a\in LR$. Hence $L\cap R=LR$.
\end{proof}

Let $A$ be a completely inverse $AG^{**}$-groupoid. Denote by $\mathcal{H}_a$ the equi\-valence $\mathcal{H}$-class containing the element $a\in A$. We say that $\mathcal{H}_a\leq\mathcal{H}_b$ if and only if $aA\subseteq bA$.

\smallskip

The following theorem is the main result of this paper.

\begin{theorem}\label{i-s} If $\rho$ is a congruence on a completely inverse $AG^{**}$-groupoid \nolinebreak $A$ and $\,a\rho\in E_{A/\rho}$ $(a\in A)$, then there exists $\,e\in E_{a\rho}$ such that $\mathcal{H}_e\leq\mathcal{H}_a$.
\end{theorem}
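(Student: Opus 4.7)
The plan is to take $e := aa^{-1}$ as the required idempotent and verify the three desiderata: $e \in E_A$, $e \,\rho\, a$, and $eA \subseteq aA$. The first is immediate since in a completely inverse $AG^{**}$-groupoid $aa^{-1} = a^{-1}a \in E_A$ (noted in the preliminaries); and the third is in fact an equality by Proposition \ref{ideals}(d), which gives $eA = (aa^{-1})A = aA$, so that $\mathcal{H}_e = \mathcal{H}_a$ and in particular $\mathcal{H}_e \leq \mathcal{H}_a$. Hence only the membership $e \,\rho\, a$ requires genuine work.

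To establish it, I would unpack the hypothesis $a\rho \in E_{A/\rho}$ as $(a\rho)(a\rho) = a\rho$, i.e.\ $a^2 \,\rho\, a$. Since $\rho$ is compatible with multiplication, multiplying both sides on the right by $a^{-1}$ yields $(aa)a^{-1} \,\rho\, aa^{-1}$. The key reduction is then $(aa)a^{-1} = a$: applying the left invertive law (\ref{e1}) with $x = y = a$ and $z = a^{-1}$ gives $(aa)a^{-1} = (a^{-1}a)a$, and then the completeness identity $a^{-1}a = aa^{-1}$ together with the defining identity $a = (aa^{-1})a$ yield $(a^{-1}a)a = (aa^{-1})a = a$. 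Therefore $a \,\rho\, aa^{-1} = e$, as required.

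I do not anticipate any serious obstacle: the whole argument is a single application of congruence-compatibility followed by an identity manipulation of the sort repeatedly performed in Section~1. The only conceptual point is recognising that $e := aa^{-1}$ is the natural candidate, which is morally the standard choice underlying Lallement's Lemma in the semigroup setting, transported through the $AG^{**}$-axioms.
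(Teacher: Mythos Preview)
Your argument is correct. The identity $(aa)a^{-1}=(a^{-1}a)a=(aa^{-1})a=a$ is valid, so from $a^2\,\rho\,a$ and right compatibility you indeed obtain $a\,\rho\,aa^{-1}$; since $aa^{-1}\in E_A$ and $(aa^{-1})A=aA$ by Proposition~\ref{ideals}(d), all three requirements are met (in fact with $\mathcal{H}_e=\mathcal{H}_a$, not merely $\leq$).

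The paper proceeds differently: it takes $x=(a^2)^{-1}$, sets $e=a\cdot xa$, and then spends most of the proof verifying $e^2=e$ through a chain of manipulations involving \eqref{e1}, \eqref{**} and \eqref{e*}, finally checking $eA\subseteq aA$ directly. This mirrors the classical proof of Lallement's Lemma for regular semigroups, where $aa^{-1}$ need not be idempotent and one is forced to work with $(a^2)^{-1}$. You have exploited the extra hypothesis ``completely inverse'' ($aa^{-1}=a^{-1}a\in E_A$) to bypass that machinery entirely: the candidate $aa^{-1}$ is idempotent from the outset, so only the congruence step remains. (A computation shows the paper's $e=a\cdot xa$ actually equals $aa^{-1}$ anyway, since $x=(a^{-1})^2$ and $(a^{-1}a^{-1})a=(aa^{-1})a^{-1}=a^{-1}$, so $e=a\cdot a^{-1}$; the two routes therefore produce the same idempotent, yours just reaches it in one line.) The trade-off is that the paper's argument would survive weakening to the merely inverse case, whereas yours leans squarely on completeness---but since the theorem is stated for completely inverse $AG^{**}$-groupoids, your shortcut is entirely appropriate and considerably cleaner.
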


\begin{proof} Let $\rho$ be a congruence on $A,$ $a\in A$ and $a\rho a^2$. We know that there exists $x\in A$ such that $a^2=a^2x\cdot a^2,$ $\,x=xa^2\cdot x$ and $a^2x=xa^2\in E_A$. Notice that
$$
a^2x\cdot aa=a(a^2x\cdot a)=a(xa^2\cdot a)=a(aa^2\cdot x)=aa^2\cdot ax=a^2\cdot a^2x=a^2\cdot xa^2,
$$
i.e., $a^2=a^2\cdot xa^2$. Put $e=a\cdot xa$. Then $e\,\rho\,(a^2\cdot xa^2)=a^2\,\rho\,a$. Hence $e\in a\rho$. Also,
$$
e^2=(a\cdot xa)(a\cdot xa)=a((a\cdot xa)\cdot xa)=a(ax\cdot(xa\cdot a))=a(ax\cdot a^2x).
$$
Further,
$$
ax\cdot a^2x=ax\cdot xa^2=a^2x\cdot xa=xa^2\cdot xa=(xa^2\cdot x)a
$$
by \eqref{e*}, since $xa^2\in E_A$. Hence $ax\cdot a^2x=xa$. Consequently,
$$
e^2=a\cdot xa=e\in E_A.
$$
Thus, $e\in E_{a\rho}$.

Finally, let $b\in A$. Then $eb=(a\cdot xa)b =(b\cdot xa)a\in Aa=aA$, therefore, $eA\subseteq aA$, so $\mathcal{H}_e\leq\mathcal{H}_a$.
\end{proof}

We say that a congruence $\rho$ on a groupoid $A$ is \emph{idempotent-separating} if $e\rho f$ implies that $\,e=f\,$ for all $e,f\in E_A$. Furthermore, $\rho$ is a \emph{semilattice} congruence if $A/\rho$ is a semilattice.~Finally, $A$ is said to be a \emph{semilattice $A/\rho$ of $AG$-groups} if $\rho$ is a semilattice congruence and every $\rho$-class of $A$ is an $AG$-group.

\begin{corollary}\label{H} Let $A$ be a completely inverse $AG^{**}$-groupoid.~Then$:$

\smallskip
$(a)$ \ $\mathcal{H}$ is the least semilattice congruence on $A;$

\smallskip
$(b)$ \ $\mathcal{H}$ is the maximum idempotent-separating congruence on $A;$

\smallskip
$(c)$ \ $A$ is a semilattice $A/\mathcal{H}\cong E_A$ of $AG$-groups $\,\mathcal{H}_e$ $(e\in E_A)$.
\end{corollary}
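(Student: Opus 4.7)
My plan rests on the key observation, extracted from Proposition \ref{ideals}(d) and (f), that $a\,\mathcal{H}\,b$ if and only if $aa^{-1}=bb^{-1}$. Combined with $(ab)^{-1}=a^{-1}b^{-1}$ and the medial law (\ref{medial}), this reduces essentially every question to the semilattice $E_A$ supplied by Proposition \ref{semilattice}.

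For part (a), I would first verify that $\mathcal{H}$ is a semilattice congruence via the single identity
\[
(ab)(ab)^{-1}=aa^{-1}\cdot bb^{-1},
\]
which yields compatibility with multiplication, idempotency in the quotient ($a^2\,\mathcal{H}\,a$), and commutativity ($ab\,\mathcal{H}\,ba$). For minimality, let $\sigma$ be any semilattice congruence on $A$. The crux is that in a semilattice, viewed as an $AG^{**}$-groupoid, every element is its own unique inverse (immediate from associativity, commutativity and idempotency). Applying this inside $A/\sigma$ and noting that $[a^{-1}]\in V([a])$, we get $[a^{-1}]=[a]$, whence $[aa^{-1}]=[a]^2=[a]$. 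Thus $a\,\sigma\,aa^{-1}$ for every $a$, and $a\,\mathcal{H}\,b$ yields $a\,\sigma\,aa^{-1}=bb^{-1}\,\sigma\,b$, so $\mathcal{H}\subseteq\sigma$.

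For part (b), $\mathcal{H}$ is idempotent-separating by Proposition \ref{ideals}(f). Maximality requires the observation that every quotient $A/\sigma$ is again a completely inverse $AG^{**}$-groupoid with $([a])^{-1}=[a^{-1}]$; then $a\,\sigma\,b$ forces $a^{-1}\,\sigma\,b^{-1}$, hence $aa^{-1}\,\sigma\,bb^{-1}$, and idempotent-separation collapses these idempotents to $aa^{-1}=bb^{-1}$. Part (c) then follows by combining (a) with the map $\mathcal{H}_a\mapsto aa^{-1}$: it is well-defined and injective by Proposition \ref{ideals}(f), surjective because $e\in\mathcal{H}_e$ for every $e\in E_A$, and a semilattice homomorphism by the displayed identity above. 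A routine check then shows that each class $\mathcal{H}_e=\{a\in A:aa^{-1}=e\}$ is closed under the operation, has $e$ as left identity ($ea=(aa^{-1})a=a$), and admits $a\mapsto a^{-1}$ as a left-inverse map.

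The delicate step I anticipate is the minimality argument in (a). Theorem \ref{i-s} produces, in each $\sigma$-class that is idempotent in $A/\sigma$, some $e\in E_A$ belonging to that class, but nothing \emph{a priori} forces two such idempotents drawn from $\mathcal{H}$-related classes to coincide; I therefore favour the route above, in which uniqueness of inverses inside the semilattice quotient substitutes directly for any idempotent-extraction step.
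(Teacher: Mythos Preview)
Your argument is correct, and for parts (b) and (c) it runs essentially parallel to the paper. The genuine divergence is in part (a).

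For the congruence and semilattice properties of $\mathcal{H}$, the paper works directly with the defining condition $aA=bA$ and checks $A(ca)=A(cb)$, $(ac)A=(bc)A$ by hand; you instead exploit the equivalent description $a\,\mathcal{H}\,b\Leftrightarrow aa^{-1}=bb^{-1}$ together with the medial identity $(ab)(ab)^{-1}=aa^{-1}\cdot bb^{-1}$, which packages compatibility, idempotency and commutativity into one line. Both are fine; yours is tidier.

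The substantive difference is the minimality of $\mathcal{H}$. You bypass Theorem~\ref{i-s} altogether: in the semilattice $A/\sigma$ every element is its own (unique) inverse, so $[a^{-1}]=[a]$ and hence $a\,\sigma\,aa^{-1}$; combined with $aa^{-1}=bb^{-1}$ this gives $\mathcal{H}\subseteq\sigma$ directly. The paper instead uses Theorem~\ref{i-s} through a contradiction argument: if $\mathcal{H}\not\subseteq\rho$ for some semilattice congruence $\rho$, then $\mathcal{H}\cap\rho$ is a semilattice congruence strictly contained in $\mathcal{H}$; since each $\mathcal{H}$-class carries exactly one idempotent (Proposition~\ref{ideals}(f)), some $(\mathcal{H}\cap\rho)$-class is idempotent-free, contradicting Theorem~\ref{i-s}. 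Your route is more elementary and self-contained; the paper's route, by contrast, displays Theorem~\ref{i-s} (the Lallement-type lemma, billed as the main result) as the engine driving the corollary. Your closing remark about Theorem~\ref{i-s} not obviously yielding coinciding idempotents is fair for a \emph{direct} attack; the paper sidesteps that issue precisely by passing to the intersection congruence.
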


\begin{proof} $(a)$. Let $aA=bA$ and $c,x\in A$.~Then $x\cdot ca=c\cdot xa$.~On the other hand,
$$
xa\in Aa=aA=bA=Ab,
$$
i.e., $xa=yb$, where $b\in A$, so $x\cdot ca=c\cdot yb=y\cdot cb\in A(cb)$.~Thus $A(ca)\subseteq A(cb)$. By symmetry, we conclude that $A(ca)=A(cb)$. Moreover, $a=yb$ for some $y\in A$. Hence $ac\cdot x=xc\cdot a=xc\cdot yb=bc\cdot yx\in (bc)A$.~Thus $(ac)A\subseteq (bc)A$.~In a similar way we can obtain the converse inclusion, so $(ac)A=(bc)A$.~Consequently, $\mathcal{H}$ is a congruence (by Proposition \ref{ideals} $(b)$). In the light of Proposition \ref{ideals} $(d)$, every $\mathcal{H}$-class contains an idempotent of $A$.~This implies that $A/\mathcal{H}$ is a semilattice, that is, $\mathcal{H}$ is a semilattice congruence on $A$.

Suppose that there is a semilattice congruence $\rho$ on $A$ such that $\mathcal{H}\nsubseteq\rho.$\\ Then the relation $\mathcal{H}\cap\rho$ is a semilattice congruence which is properly contained in $\mathcal{H}$, and so not every $(\mathcal{H}\cap\rho)$-class contains an idempotent of $A$, since each $\mathcal{H}$-class contains exactly one idempotent (Proposition \ref{ideals} $(f)$), a contradiction with Theorem \ref{i-s}.~Consequently, $\mathcal{H}$ must be the least semilattice congruence on $A$.

\smallskip

$(b)$. By $(a)$ and Proposition \ref{ideals} $(f)$, $\mathcal{H}$ is an idempotent-separating congruence on $A$.~On the other hand, if $\rho$ is an idempotent-separating congruence on $A$ and $(a,b)\in\rho$, then $(a^{-1},b^{-1})\in\rho$, so $(aa^{-1},bb^{-1})\in\rho$. Hence $aa^{-1}=bb^{-1}$. Let $x\in A$. Then
$$
xa=x(aa^{-1}\cdot a)=x(bb^{-1}\cdot a)=bb^{-1}\cdot xa=(xa\cdot b^{-1})b\in Ab.$$
Thus $Aa\subseteq Ab$.~By symmetry, we conclude that $Aa=Ab$.~Consequently, $a\,\mathcal{H}\,b$ (Proposition \ref{ideals} $(b)$), that is, $\rho\subseteq\mathcal{H}$, as required.

\smallskip $(c)$.
We show that every $\mathcal{H}$-class of $A$ is an $AG$-group.~In view of the above and Proposition \ref{ideals} $(d), (e)$, each $\mathcal{H}$-class is an $AG^{**}$-groupoid.~Consider an arbitrary $\mathcal{H}$-class $\mathcal{H}_e$ ($e\in E_A$).~Let $a\in\mathcal{H}_e$.~Then $aa^{-1}\in\mathcal{H}_e$.~Hence $aa^{-1}=e$ and so $ea=a$, that is, $e$ is a left identity of $\mathcal{H}_e$.~Since $a^{-1}a=e$ and $a^{-1}\in\mathcal{H}_e$, then $\mathcal{H}_e$ is an $AG$-group.~Obviously, $A/\mathcal{H}\cong E_A$.~Consequently, $A$ is a semilattice $A/\mathcal{H}\cong E_A$ of $AG$-groups $\mathcal{H}_e$ $(e\in E_A)$.
\end{proof}

We say that an ideal $K$ of a groupoid $A$ is the \emph{kernel} of $A$ if $K$ is contained in every ideal of $A$.~If in addition, $K$ is an $AG$-group, then it is called the \emph{$AG$-group kernel} of $A$.~Finally, a congruence $\rho$ on $A$ is said to be an \emph{$AG$-group} congruence if $A/\rho$ is an $AG$-group.

\begin{corollary} Let $A$ be a completely inverse $AG^{**}$-groupoid.~If $\,e$ is a zero of $E_A$, then $\mathcal{H}_e=eA$ is the $AG$-group kernel of $A$ and the map $\varphi:A\to eA$ given by $a\varphi=ea$ $(a\in A)$ is an epimorphism such that $x\varphi=x$ for all $x\in eA$.
\end{corollary}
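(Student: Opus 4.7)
The plan is to prove the four assertions in order: (i) $\mathcal{H}_e=eA$ as sets; (ii) this common set is both an $AG$-group and an ideal contained in every ideal of $A$; (iii) $\varphi$ is a homomorphism into $eA$; and (iv) $\varphi$ fixes $eA$ pointwise, which gives surjectivity for free. The backbone is Corollary~\ref{H}(c), which already guarantees that every $\mathcal{H}$-class carries an $AG$-group structure; the zero hypothesis on $e$ enters only through the identity $ef=e$ for every $f\in E_A$.

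For (i) I would combine Proposition~\ref{ideals}(d), which says $\mathcal{H}_a=\mathcal{H}_{aa^{-1}}$, with a short direct calculation. Writing $a=eb\in eA$ and using $(xy)^{-1}=x^{-1}y^{-1}$, the fact that $e^{-1}=e$ (since $e\in E_A$), and the medial law \eqref{medial}, one gets $aa^{-1}=(eb)(eb^{-1})=(ee)(bb^{-1})=e(bb^{-1})=e$, where the last equality uses that $bb^{-1}\in E_A$ and that $e$ is its zero. Hence $eA\subseteq\mathcal{H}_e$. For the reverse inclusion, Proposition~\ref{ideals}(f) tells us that $e$ is the unique idempotent of $\mathcal{H}_e$, so any $a\in\mathcal{H}_e$ satisfies $aa^{-1}=e$ and therefore $a=(aa^{-1})a=ea\in eA$.

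With (i) in hand, Corollary~\ref{H}(c) identifies $eA$ with an $AG$-group and Proposition~\ref{ideals}(b) makes it an ideal of $A$, so only the kernel property remains. For an arbitrary ideal $I$ of $A$, I would pick any $a\in I$, deduce $aa^{-1}\in I$, and then use the zero property to get $e=e(aa^{-1})\in AI\subseteq I$, whence $eA\subseteq IA\subseteq I$. For the map $\varphi$, the medial law yields $(ea)(eb)=(ee)(ab)=e(ab)$, so $\varphi$ is a homomorphism into $eA$; and for $x=ec\in eA$, identity \eqref{e*} gives $x\varphi=e(ec)=(ee)c=ec=x$, which simultaneously delivers the retraction property on $eA$ and surjectivity of $\varphi$.

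There is no substantial conceptual obstacle: each step reduces to a short manipulation with the medial law, identity \eqref{e*}, and the semilattice order on $E_A$. The only point requiring real care is the opening calculation $aa^{-1}=e$ for $a\in eA$, since this is precisely where the zero hypothesis on $e$ is invoked — the same hypothesis reappearing in the kernel step $e=e(aa^{-1})\in I$. Everything else is forced by results already proved earlier in the paper.
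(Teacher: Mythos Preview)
Your proof is correct and follows essentially the same route as the paper's: the same medial-law computation $xx^{-1}=e$ for $x\in eA$, the same use of the zero hypothesis to place $e$ inside an arbitrary ideal, and the same verification that $\varphi$ is a retracting homomorphism. You merely supply more detail where the paper writes ``obviously'' (for $\mathcal{H}_e\subseteq eA$ the paper implicitly uses $a\in Aa=aA=eA$, whereas you go through Proposition~\ref{ideals}(d),(f) to conclude $aa^{-1}=e$; both arguments are fine).
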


\begin{proof} Obviously, $\mathcal{H}_e\subseteq eA$.~Conversely, if $x=ea\in eA$, then
$$
xx^{-1}=ea\cdot ea^{-1}=ee\cdot aa^{-1}=e.
$$
In a view of Proposition \ref{ideals} $(d)$, $x\in\mathcal{H}_e$.~Consequently, $\mathcal{H}_e=eA$.~If $I$ is an ideal of $A$, then clearly $E_I\neq\emptyset$.~Let $i\in E_I$. Then $e=ei\in E_I$.~Hence $a=ea\in I$ for all $a\in\mathcal{H}_e$, so $\mathcal{H}_e\subseteq I$.~Thus $\mathcal{H}_e=eA$ is the $AG$-group kernel of $A$.~Also, for all $a,b\in A$, $(a\varphi)(b\varphi)=(ea)(eb)=(ee)(ab)=e(ab)=(ab)\varphi$, i.e., $\varphi$ is a homomorphism of $A$ into $eA$.~Evidently, $\varphi$ is surjective. Finally, $\varphi_{|eA}=1_{eA}$ (by Proposition \ref{semilattice}).
\end{proof}

\begin{corollary} Let $A$ be a completely inverse $AG^{**}$-groupoid.~If $\,e$ is a zero of \nolinebreak $E_A$, then
$$\sigma=\{(a,b)\in A\times A:ea=eb\}$$
is the least $AG$-group congruence on $A$ and $A/\sigma\cong\mathcal{H}_e$.
\end{corollary}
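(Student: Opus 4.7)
The plan is to recognise $\sigma$ as the kernel congruence of the epimorphism $\varphi:A\to eA$ supplied by the previous corollary, and then verify minimality by a short direct argument. Since $(a,b)\in\sigma$ means precisely that $a\varphi=b\varphi$, the fact that $\varphi$ is a homomorphism automatically yields that $\sigma$ is a congruence on $A$ and that the map $a\sigma\mapsto a\varphi$ is a well-defined isomorphism $A/\sigma\cong\varphi(A)=eA=\mathcal{H}_e$. By Corollary~\ref{H}~$(c)$, $\mathcal{H}_e$ is an $AG$-group, so $\sigma$ is an $AG$-group congruence.

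For minimality, let $\tau$ be an arbitrary $AG$-group congruence on $A$ and let $u$ denote the left identity of the $AG$-group $A/\tau$. The key step is the observation that $u$ is the only idempotent of $A/\tau$; since $e\in E_A$ forces $e\tau\in E_{A/\tau}$, this yields $e\tau=u$. Granting this, any $(a,b)\in\sigma$ satisfies $ea=eb$, so
\[
u\cdot a\tau=(e\tau)(a\tau)=(ea)\tau=(eb)\tau=(e\tau)(b\tau)=u\cdot b\tau,
\]
and applying the left-identity property of $u$ gives $a\tau=b\tau$. Hence $\sigma\subseteq\tau$, proving that $\sigma$ is the least $AG$-group congruence on $A$.

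The main obstacle is the small supporting lemma that the left identity of an $AG$-group is its unique idempotent. I would prove it as follows: every $AG$-group $G$ is an $AG^{**}$-groupoid (since it has a left identity), so $E_G$ is a semilattice by Proposition~\ref{semilattice}, giving $uf=fu=f$ for every idempotent $f\in G$. Taking a left inverse $f'$ of $f$ (so that $f'f=u$) and applying~\eqref{**},
\[
f=fu=f(f'f)=f'(ff)=f'f=u.
\]
Applied to $G=A/\tau$, this lemma is the only ingredient needed beyond the previous corollary, and the isomorphism $A/\sigma\cong\mathcal{H}_e$ then drops out of the first isomorphism theorem.
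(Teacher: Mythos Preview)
Your proof is correct and follows essentially the same approach as the paper: both recognise $\sigma$ as the kernel of the epimorphism $\varphi$ from the previous corollary (giving the congruence property and the isomorphism $A/\sigma\cong\mathcal{H}_e$), and both establish minimality by passing to the $AG$-group $A/\tau$ and arguing that $ea=eb$ forces $a\tau=b\tau$. The only cosmetic difference is that the paper invokes cancellation in $A/\tau$ directly, whereas you first identify $e\tau$ with the left identity and then use the left-identity property; your supporting lemma on the uniqueness of the idempotent in an $AG$-group is in fact already recorded in the preliminaries of the paper.
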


\begin{proof}
It is clear that $\sigma$ is an $AG$-group congruence on $A$ induced by $\varphi$ (defined in the previous corollary). If $\rho$ is also an $AG$-group congruence on $A$ and $a\,\sigma\,b$, then $(e\rho)(a\rho)=(e\rho)(b\rho)$. By cancellation, $a\,\rho\,b$ and so $\sigma\subseteq\rho$. Obviously, $A/\sigma\cong\mathcal{H}_e$.
\end{proof}

\begin{remark} Let $I$ be an ideal of a completely inverse $AG^{**}$-groupoid \nolinebreak $A$. The relation $\rho_I=(I\times I)\cup 1_A$ is a congruence on $A$. If $e$ is a zero of $E_A$, then $\mathcal{H}_e$ is an ideal of $A$ and $\sigma\,\cap\,\rho_{\mathcal{H}_e}=1_A$.~It follows that $A$ is a subdirect product of the group $\mathcal{H}_e$ and the completely inverse $AG^{**}$-groupoid $A/\mathcal{H}_e$.~Note that we may think about $A/\mathcal{H}_e$ as a groupoid $B=(A\setminus\mathcal{H}_e)\cup\{e\}$ with zero $e$, where all products $ab\in\mathcal{H}_e$ are equal \nolinebreak $e$. In fact, $fg=e$ in $A$ ($f,g\in E_A$) if and only if $\mathcal{H}_f\mathcal{H}_g\subseteq\{e\}=\mathcal{H}_e$ in $B$.

Obviously, in any finite completely inverse $AG^{**}$-groupoid $A$, the semilattice $E_A$ has a zero.
\end{remark}

\small

\medskip
\footnotesize{
\rightline{Received \ August 29, 2012}

Institute of Mathematics and Computer Science

         Wroclaw University of Technology

        Wyb. Wyspianskiego 27

         $50-370$ Wroclaw

         Poland

E-mails: wieslaw.dudek@pwr.wroc.pl, \ \ \ \
romekgigon@tlen.pl}

\end{document}